\newcommand{\rrvert}{\vert}
\newcommand{\rrVert}{\Vert}
\newcommand{\llvert}{\vert}
\newcommand{\llVert}{\Vert}
\renewcommand{\mid}{|}
\newcommand{\overset}{\stackrel}
\newcommand{\eqref}[1]{(\ref{#1})}
\newtheorem{theorem}{Theorem}[section]
\newtheorem{corollary}[theorem]{Corollary}
\newtheorem{lemma}[theorem]{Lemma}
\newtheorem{proposition}[theorem]{Proposition}
\newcommand{\ones}{\mathbf1}
\newcommand{\argmin}{\mathop{\arg\min}\limits}
\newcommand{\Expect}{\mathbf{E}}
\newcommand{\sign}{\operatorname{sign}}
\newcommand{\diag}{\operatorname{diag}}
\newcommand{\unif}{\operatorname{Unif}}
\renewcommand{\dagger}{+}
\newcommand{\R}{\mathbb{R}}
\newcommand{\V}{{\mathcal V}}
\newcommand{\Ee}{\mathbb{E}}
\newcommand{\Pp}{\mathbb{P}}
\newcommand{\E}{M}
\newcommand{\hE}{\hat{M}}
\newcommand{\bb}{{\mathbf b}}
\newcommand{\bc}{{\mathbf c}}
\newcommand{\be}{{\mathbf e}}
\newcommand{\bs}{{\mathbf s}}
\newcommand{\bu}{{\mathbf u}}
\newcommand{\bw}{{\mathbf w}}
\newcommand{\bx}{{\mathbf x}}
\newcommand{\by}{{\mathbf y}}
\newcommand{\bz}{{\mathbf z}}
\newcommand{\mathbfu}{{\bolds\mu}}
\newcommand{\bbeta}{{\bolds\beta}}
\newcommand{\hbeta}{\hat{\beta}}
\newcommand{\boldeta}{{\bolds\eta}}
\begin{document}
\begin{frontmatter}

\title{Exact post-selection inference, with application to~the~lasso}
\runtitle{Post-selection inference for the lasso}

\begin{aug}
\author[A]{\fnms{Jason D.} \snm{Lee}\thanksref{M1,T1}\ead[label=e1]{jasondlee@berkeley.edu}},
\author[B]{\fnms{Dennis L.} \snm{Sun}\corref{}\thanksref{M2,T2}\ead[label=e2]{dennisliusun@gmail.com}},
\author[C]{\fnms{Yuekai} \snm{Sun}\thanksref{M1,T3}\ead[label=e3]{yuekai@berkeley.edu}}\\
\and
\author[D]{\fnms{Jonathan E.} \snm{Taylor}\thanksref{M3,T4}\ead[label=e4]{jonathan.taylor@stanford.edu}}
\runauthor{Lee, Sun, Sun and Taylor}
\affiliation{University of California, Berkeley\thanksmark{M1},
California Polytechnic State University\thanksmark{M2} and
Stanford University\thanksmark{M3}}
%
\address[A]{J. D. Lee\\
University of California, Berkeley\\
465 Soda Hall\\
Berkeley, California 94720\\
USA\\
\printead{e1}}
\address[B]{D. L. Sun\\
California Polytechnic State University\\
Building 25, Room 107D\\
San Luis Obispo, California\\
USA\\
\printead{e2}}
\address[C]{Y. Sun\\
University of California, Berkeley\\
367 Evans Hall\\
Berkeley, California 94720\\
USA\\
\printead{e3}}
\address[D]{J. E. Taylor\\
Stanford University\\
390 Serra Mall\\
Stanford, California\\
USA\\
\printead{e4}\hspace*{38pt}}
\end{aug}
\thankstext{T1}{Supported by a National Defense Science and Engineering Graduate Fellowship and a Stanford Graduate Fellowship.}
\thankstext{T2}{Supported by a Ric Weiland Graduate Fellowship and the Stanford Genome Training Program (SGTP; NIH/NHGRI).}
\thankstext{T3}{Supported in part by the NIH Grant U01GM102098.}
\thankstext{T4}{Supported by the NSF Grant DMS-12-08857, and by AFOSR Grant 113039.}

%
\received{\smonth{1} \syear{2015}}
%
\revised{\smonth{9} \syear{2015}}

%
\begin{abstract}
We develop a general approach to valid inference after model selection.
At the core of our framework is a result that characterizes the
distribution of a post-selection estimator
conditioned on the \emph{selection event}.
We specialize the approach to model selection by the lasso to form
valid confidence intervals for the selected
coefficients and test whether all relevant variables have been included
in the model.
\end{abstract}

%
\begin{keyword}[class=AMS]
\kwd[Primary ]{62F03}
\kwd{62J07}
\kwd[; secondary ]{62E15}
\end{keyword}
\begin{keyword}
\kwd{Lasso}
\kwd{confidence interval}
\kwd{hypothesis test}
\kwd{model selection}
\end{keyword}
\end{frontmatter}

\setcounter{footnote}{4}

\section{Introduction}
\label{sec:intro}

As a statistical technique, linear regression is both simple and
powerful. Not only does it provide estimates of the ``effect'' of each
variable, but it also quantifies the uncertainty in those estimates,
allowing inferences to be made about the effects. However, in many
applications, a practitioner starts with a large pool of candidate
variables, such as genes or demographic features, and does not know
{a priori} which are relevant. This is especially problematic
when there are more variables than observations, since then the model is unidentifiable (at least in the setting where the predictors are assumed fixed).

In such settings, it is tempting to let the data decide which
variables to include in the model. For example, one common approach
when the number of variables is not too large is to fit a linear model
with all variables included, observe which ones are significant at
level $\alpha$, and then refit the linear model with only those
variables included. The problem with this is that the $p$-values can no
longer be trusted, since the variables that are selected will tend to
be those that are significant. Intuitively, we are ``overfitting'' to a
particular realization of the data.

To formalize the problem, consider the standard linear regression
setup, where the response $\by \in \R^n$ is generated from a multivariate normal distribution:
%
\begin{equation}
\by\sim N\bigl(\mathbfu, \sigma^2I_n\bigr) \label{eq:model}
\end{equation}
where $\bolds{\mu}$ is modeled as a linear function of predictors $\bx_1,
\ldots, \bx_p \in\R^n$, and $\sigma^2$ is assumed known. (We
consider the more realistic case where $\sigma^2$ is unknown in
Section~\ref{sec:estimate:sigma}.) We choose a subset $M \subset\{1,
\ldots, p\}$ and ask for the linear combination of the predictors in
$M$ that minimizes the expected error, that is,
%
\begin{equation}
\bbeta^M \equiv\mathop{\argmin}\limits_{{\mathbf b}^M} \Ee\bigl\llVert
\by-
X_M {\mathbf b}^M \bigr\rrVert^2 =
X_M^\dagger\mathbfu, \label{eq:target}
\end{equation}
where $X_M^\dagger\equiv(X_M^T X_M)^{-1} X_M^T$ is the pseudo-inverse
of $X_M$. Notice that \eqref{eq:target} implies that the targets
$\beta^M_j$ and $\beta^{M'}_j$ in different models $M \neq M'$ are in
general different. This is simply a restatement of the well-known fact
that a regression coefficient describes the effect of a predictor, {\em
adjusting for the other predictors in the model}. In general, the
coefficient of a predictor cannot be compared across different models.

Thus, ``inference after selection'' is ambiguous in linear regression
because the target of inference changes with the selected model [\citet
{berk2013posi}]. In the next section, we discuss several ways to resolve
this ambiguity.

\section{Post-selection inference in linear regression}
\label{sec:goals}

At first blush, the fact that the target $\bbeta^M$ changes with the
model is deeply troubling, since it seems to imply that the parameters
are random. However, the randomness is actually in the {\em choice} of
which parameters to consider, not in the parameters themselves. Imagine
that there are {a priori} $p2^{p-1}$ well-defined population
parameters, one for each coefficient in all $2^p$ possible models:
\[
\bigl\{ \beta^M_j: M \subset\{1, \ldots, p\}, j \in M
\bigr\}.
\]
We\vspace*{1pt} only ever form inferences for the parameters $\beta^{\hat M}_j$ in
the model $\hat M$ we select. This adaptive choice of which parameters
to consider can lead to inferences with undesirable frequency
properties, as noted by
\citet{benjamini2005false} and \citet{benjamini2009selective}.

To be\vspace*{-3pt} concrete, suppose we want a confidence interval $C^{\hat M}_j$
for a parameter $\beta^{\hat M}_j$. What frequency properties should
$C^{\hat M}_j$ have? By analogy to the classical setting, we might require that
\[
\Pp\bigl(\beta^{\hat M}_j \in C^{\hat M}_j
\bigr) \geq1-\alpha,
\]
but the event inside the probability is not well-defined because $\beta^M_j$ is undefined when
$j \notin M$. Two ways around this issue are suggested by \citet
{berk2013posi}:
\begin{longlist}[2.]
\item[1.] \textit{Conditional coverage}: Since we form an interval for $\beta^M_j$
if and only if model $M$ is selected, that is, $\hat M = M$, it makes
sense to condition on this event. Hence,\vadjust{\goodbreak} we might require that our
confidence interval $C^M_j$ satisfy
%
\begin{equation}
\Pp\bigl(\beta^M_j \in C^M_j
\mid\hat M = M\bigr) \geq1-\alpha. \label{eq:conditional_coverage}
\end{equation}
The benefit of this approach is that we avoid ever having to compare
coefficients across two different models $M \neq M'$.

Another way to understand conditioning on the model is to consider {\em
data splitting} [\citet{cox1975note}], an approach to post-selection
inference that most statisticians would agree is valid. In data
splitting, the data is divided into two halves, with one half used to
select the model and the other used to conduct inference. 
\citet{fithian2014optimal} argues that inferences obtained by
data splitting are only valid conditional on the model that was selected
on the first half of the data. Therefore, conditional coverage is a
reasonable frequency property to require of a post-selection confidence interval.


\item[2.] \textit{Simultaneous coverage}: It\vspace*{1pt} also makes sense to talk about events
that are defined simultaneously over all $j \in\hat M$. \citet
{berk2013posi} propose controlling the familywise error rate
%
\begin{equation}
\mathrm{FWER} \equiv\Pp\bigl(\beta^{\hat M}_j \notin
C^{\hat M}_j \mbox{ for any }j \in\hat M\bigr), \label{eq:fwer}
\end{equation}
but this is very stringent when many predictors are involved.

Instead of controlling the probability of making {\em any} error, we
can control the expected proportion of errors---although ``proportion
of errors'' is ambiguous in the event that we select zero
variables. \citet{benjamini2005false} simply declare the error to
be zero when $\llvert \hat M \rrvert = 0$:
%
\begin{equation}
\mathrm{FCR} \equiv\Ee\biggl[ \frac{\llvert \{ j \in\hat M: \beta
^{\hat M}_j
\notin C^{\hat M}_j \}\rrvert }{ \llvert \hat M \rrvert }; \llvert
\hat M \rrvert> 0 \biggr],
\label{eq:fcr}
\end{equation}
while \citet{storey2003positive} suggests conditioning on
$\llvert \hat M \rrvert > 0$:
%
\begin{equation}
\mathrm{pFCR} \equiv\Ee\biggl[ \frac{\llvert \{ j \in\hat M:
\beta
^{\hat M}_j \notin C^{\hat M}_j \}\rrvert }{ \llvert \hat M
\rrvert } \Big| |\hat M |
> 0 \biggr]. \label{eq:pfcr}
\end{equation}
The two criteria are closely related. Since $\mathrm{FCR} = \mathrm{pFCR} \cdot\Pp
(\llvert \hat M \rrvert > 0)$, $\mathrm{pFCR}$ control implies $\mathrm{FCR}$ control.
\end{longlist}

The two ways above are related: conditional coverage \eqref
{eq:conditional_coverage} implies $\mathrm{pFCR}$ \eqref{eq:pfcr} (and hence,
FCR) control.

%
\begin{lemma}\label{lem:fcr}
Consider a family of intervals $\{ C^{\hat M}_j \}_{j\in\hat M}$ that
each have conditional $(1-\alpha)$ coverage:
\[
\Pp\bigl(\beta^{\hat M}_j \notin C^{\hat M}_j
\mid\hat M = M \bigr) \leq\alpha\qquad\mbox{for all $M$ and $j \in M$}.
\]
Then $\mathrm{FCR} \leq \mathrm{pFCR} \leq\alpha$.
\end{lemma}

\begin{pf}
Condition on $\hat M$ and iterate expectations:
\begin{eqnarray*}
\mathrm{pFCR} &=& \Ee\biggl[\Ee\biggl[ \frac{\llvert \{ j
\in
\hat M: \beta^{\hat M}_j \notin C^{\hat M}_j \}\rrvert }{\llvert \hat
M \rrvert } \Big|\hat M
\biggr] \Big|\llvert\hat M \rrvert> 0 \biggr]
\\
&=& \Ee\biggl[
\frac{\sum_{j\in\hat M} \Pp( \beta
^{\hat M}_j \notin C^{\hat M}_j \mid\hat M )}{\llvert \hat
M \rrvert } \Big|\llvert\hat M \rrvert> 0 \biggr]
\\
&\leq&\Ee\biggl[\frac{\alpha\llvert \hat M \rrvert
}{\llvert \hat M \rrvert } \Big|\llvert\hat M \rrvert> 0
\biggr]
\\
&=& \alpha.
\end{eqnarray*}\upqed
\end{pf}

Theorem 2 in \citet{selectionbenjamini} proves a special case of
Lemma~\ref{lem:fcr} for a particular selection procedure, and
Proposition 11 in \citet{fithian2014optimal} provides a more
general result, but this result is sufficient for our purposes:
to establish that conditional coverage is a sensible criterion to consider in post-selection inference.


Although the criterion is easy to state, how do we construct an
interval with conditional
coverage? This requires that we understand the conditional distribution
\[
\by\mid\bigl\{\hat{M}(\by) = M\bigr\}, \qquad\by\sim N\bigl(\mu,
\sigma^2 I\bigr). %
\]
One of the main contributions of this paper is to show that this distribution is
indeed possible to characterize, making valid post-selection inference feasible in the context of linear regression.

\section{Outline of our approach}

We have argued that post-selection intervals for regression
coefficients should have $1-\alpha$ coverage conditional on the
selected model:
\[
\Pp\bigl(\beta^M_j \in C^M_j
\mid\hat M = M\bigr) \geq1-\alpha,
\]
both because this criterion is interesting in its own right and because
it implies FCR control. To obtain an interval with this property, we
study the conditional distribution
%
\begin{equation}
\boldeta_M^T \by\mid\{\hat M = M\}, \label{eq:cond_dist}
\end{equation}
which will allow, more generally, conditional inference for parameters
of the form $\boldeta_M^T \mathbfu$. In particular, the regression
coefficients $\beta^M_j = \be_j^T X_M^\dagger\mathbfu$ can be written
in this form, as can many other linear contrasts.

Our\vspace*{1pt} paper focuses on the specific case where the lasso is used to
select the model~$\hat M$. We begin in Section~\ref
{sec:lasso-selection} by characterizing the event $\{ \hat M = M \}$
for the lasso. As it turns out, this event is a union of polyhedra.
More precisely, the event $\{ \hat M = M, \hat\bs_M = \bs_M \}$,
that specifies the model {\em and} the signs of the selected variables,
is a polyhedron of the form
\[
\bigl\{\by\in\R^n: A(M, \bs_M) \by\leq\bb(M,
\bs_M) \bigr\}.
\]

Therefore, if we condition on both the model and the signs, then we
only need to study
%
\begin{equation}
\boldeta^T \by\mid\{A\by\leq\bb\}.
\end{equation}
We do this in Section~\ref{sec:polyhedra}. It turns out that this
conditional distribution is essentially a (univariate) truncated
Gaussian. We use this to derive a statistic $F^\bz(\boldeta^T \by)$
whose distribution given $\{A\by\leq\bb\}$ is $\unif(0, 1)$.

\subsection{Related work}

The resulting post-selection test has a similar structure to the
pathwise significance tests of \citet{lockhart2012significance} and
\citet{taylor2014post}, which also are conditional tests. However, the
intended application of our test is different. While their significance
tests are specifically intended for the path context, our framework
allows more general questions about the model the lasso selects: we can
test the model at any value of $\lambda$ or form confidence intervals
for an individual coefficient in the model.

There is also a parallel literature on confidence intervals for
coefficients in high-dimensional linear models based on the lasso
estimator [\citet
{van2013asymptotically,zhang2011confidence,javanmard2013confidence}].
The difference between their work and ours is that they do not address
post-selection inference; their target is $\bbeta^0$, the coefficients
in the true model, rather than $\bbeta^{\hat M}$, the coefficients in
the selected model. The two will not be the same unless $\hat M$
happens to contain all nonzero coefficients of $\bbeta^0$. Although
inference for $\bbeta^0$ is appealing, it requires assumptions about
correctness of the linear model and sparsity of $\bbeta^0$. \citet
{potscher2010confidence} consider confidence intervals for the
hard-thresholding and soft-thresholding estimators in the case of
orthogonal design. Our approach instead regards the selected model as a
linear approximation to the truth, a view shared by \citet
{berk2013posi} and \citet{miller2002subset}.

The idea of post-selection inference conditional on the selected model
appears in \citet{potscher1991effects}, although the notion of
inference conditional on certain \emph{relevant subsets} dates back to
\citet{fisher1956test}; see also \citet{robinson1979conditional}.
\citeauthor{leeb2005model} (\citeyear{leeb2005model,leeb2006can}) obtained a number of negative
results about estimating the distribution of a post-selection
estimator, although they note their results do not necessarily preclude
the possibility of post-selection inference.
\citet{benjamini2005false} also consider conditioning
on the selection event, although they argue that this is too
conservative. To the contrary, we show that conditioning on the
selected model can produce reasonable confidence intervals in a wide
variety of situations.


Inference conditional on selection has also appeared in literature on
the {\em winner's curse}: \citet
{sampson2005drop,sill2009drop,zhong2008bias,zollner2007overcoming}.
These works
are not really associated with model selection in linear regression,
though they employ a similar
approach to inference.


\section{The lasso and its selection event}
\label{sec:lasso-selection}

In this paper, we apply our post-selection inference procedure to the
model selected by the lasso [\citet{tibshiranilasso}]. The lasso
estimate is the solution to the usual least squares problem with an
additional $\ell_1$ penalty on the coefficients:
%
\begin{equation}
\label{eq:lasso} \hat\bbeta\in\mathop{\argmin}\limits_{\bbeta} \frac
{1}{2} \llVert
\by-X\bbeta\rrVert^2_2+ \lambda\llVert\bbeta\rrVert
_1.
\end{equation}
The $\ell_1$ penalty shrinks many of the coefficients to exactly zero,
and the tradeoff between sparsity and fit to the data is controlled\vspace*{1pt} by
the penalty parameter \mbox{$\lambda\geq0$}. However, the distribution of
the lasso estimator $\hbeta$ is known only in the less interesting $n
\gg p$ case \citet{knight2000lasso}, and even then, only
asymptotically. Inference based on the lasso estimator is still an open
question.

Because the lasso produces sparse solutions, we can define model
``selected'' by the lasso to be simply the set of predictors with
nonzero coefficients:
\[
\hat M = \{ j: \hat\beta_j \neq0 \}.
\]
Then post-selection inference seeks to make inferences about $\bbeta
^M$, given $\{\hat M = M\}$, as defined in \eqref{eq:target}.

The rest of this section focuses on characterizing this event $\{ \hat
M = M \}$. We begin by noting that in order for a vector of
coefficients $\hat\bbeta$ and a vector of signs $\hat\bs$ to be
solutions to the lasso problem \eqref{eq:lasso}, it is necessary and
sufficient that they satisfy the Karush--Kuhn--Tucker (KKT) conditions:
%
\begin{eqnarray}
&& \hspace*{-10pt}X^T(X\hat\bbeta- \by) + \lambda\hat\bs= 0, \label{eq:lasso-kkt}
\\
\hat s_i &=& \sign(\hat\beta_j) \qquad\mbox{if $\hat \beta_j \neq0$,}
\nonumber\\[-8pt]\\[-8pt]\nonumber
\hat s_i &\in& [-1, 1] \qquad\mbox{if $\hat\beta_j = 0$.}
\end{eqnarray}

Following \citet{tibshirani2013lasso}, we consider the {\em
equicorrelation set}
%
\begin{equation}
\label{eq:equicor} \hat M \equiv\bigl\{i\in\{1,\dots,p\}: \llvert\hat
s_i\rrvert= 1 \bigr\}.
\end{equation}
%
Notice that we have implicitly identified the model $\hat M$ with the equicorrelation set.
Since $\llvert \hat s_i\rrvert = 1$ for
any $\hat\beta_i \neq
0$, the equicorrelation set does in fact contain all predictors with
nonzero coefficients, although it may also include some predictors with
zero coefficients. However, for almost every $\lambda$, the
equicorrelation set is precisely the set of predictors with nonzero
coefficients.


It turns out that it is easier to first characterize $\{ (\hat\E,
\hat\bs) = (\E, \bs) \}$ and obtain $\{ \hat\E= \E\}$ as a
corollary by taking a union over the possible signs. The next result is
an important first step.

%
\begin{lemma}
\label{lem:equiv_sets}
Assume the columns of $X$ are in general position [\citet
{tibshirani2013lasso}]. Let $\E\subset\{1, \dots, p\}$ and $\bs\in\{
-1,1\}^{\llvert \E\rrvert }$ be a candidate set of
variables and their signs,
respectively. Define the random variables
%
\begin{eqnarray}
\bw(\E, \bs) &:=& \bigl(X_\E^T X_\E
\bigr)^{-1} \bigl(X_\E^T \by- \lambda\bs\bigr),
\label{eq:beta-active}
\\
\bu(\E, \bs) &:=& X_{-\E}^T\bigl(X_\E^T
\bigr)^\dagger\bs+ \frac{1}{\lambda
} X_{-\E}^T (I -
P_{\E}) \by,\label{eq:inactive_subgradient}
\end{eqnarray}
where\vspace*{1pt} $P_\E\equiv X_\E(X_\E^T X_\E)^{-1} X_\E$ is projection onto
the column span of $X_\E$. Then the selection procedure can be
rewritten in terms of $\bw$ and $\bu$ as
%
\begin{eqnarray}
\bigl\{ (\hat\E, \hat\bs) &=& (\E, \bs) \bigr\} = \bigl\{ \sign\bigl(\bw
(\E, \bs)
\bigr) = \bs, \bigl\llVert\bu(\E, \bs)\bigr\rrVert_\infty< 1 \bigr\}.
\label{eq:equiv_sets}
\end{eqnarray}
\end{lemma}

\begin{pf}
First, we rewrite the KKT conditions \eqref{eq:lasso-kkt} by
partitioning them according to the equicorrelation set $\hat\E$,
adopting the convention that $-\hat\E$ means ``variables not in $\hat
\E$'':
\begin{eqnarray*}
X_{\hat\E}^T (X_{\hat\E}\hat{\bbeta}_{\hat\E} -
\by) + \lambda\hat\bs_{\hat\E} &=& 0,
\\
X_{-\hat\E}^T (X_{\hat\E}\hat{\bbeta}_{\hat\E} -
\by) + \lambda\hat\bs_{-\hat\E} &=& 0,
\\
\sign(\hat\bbeta_{\hat\E}) &=& \hat\bs_{\hat M},
\\
\llVert\hat\bs_{-\hat\E}\rrVert_\infty&<& 1.
\end{eqnarray*}
Since the KKT conditions are necessary and sufficient for a solution,
we obtain that ${\{ (\hat\E, \hat\bs) = (\E,\bs) \}}$ if and only
if there exist $\bw$ and $\bu$ satisfying
\begin{eqnarray*}
X_\E^T (X_\E\bw- \by) + \lambda\bs&=& 0,
\\
X_{-\E}^T (X_\E\bw- \by) + \lambda\bu&=& 0,
\\
\sign(\bw) &=& \bs,
\\
\llVert\bu\rrVert_\infty&<& 1.
\end{eqnarray*}

We can solve the first two equations for $\bw$ and $\bu$ to obtain
the equivalent set of conditions
\begin{eqnarray*}
\bw&=& \bigl(X_\E^T X_\E\bigr)^{-1}
\bigl(X_\E^T \by- \lambda\bs\bigr),
\\
\bu&=& X_{-\E}^T\bigl(X_\E^T
\bigr)^\dagger\bs+ \frac{1}{\lambda} X_{-\E}^T (I -
P_{\E}) \by,
\\
\sign(\bw) &=& \bs,
\\
\llVert\bu\rrVert_\infty&<& 1,
\end{eqnarray*}
where the first two are the definitions of $\bw$ and $\bu$ given in
\eqref{eq:beta-active} and \eqref{eq:inactive_subgradient}, and the
last two are the conditions on $\bw$ and $\bu$ given in \eqref
{eq:equiv_sets}.
\end{pf}

Lemma \ref{lem:equiv_sets} is remarkable because it says that the
event ${\{ (\hat\E, \hat\bs) = (\E, \bs) \}}$ can be rewritten as
affine constraints on $\by$. This is because $\bw$ and $\bu$ are
already affine functions of $\by$, and the constraints $\sign(\cdot)
= \bs$ and $\llVert\cdot\rrVert_\infty< 1$ can also be
rewritten in terms of
affine constraints. The following proposition makes this explicit.

%
\begin{proposition}
\label{prop:A_b}
Let $\bw$ and $\bu$ be defined as in \eqref{eq:beta-active} and
\eqref{eq:inactive_subgradient}. Then
%
\begin{eqnarray}
\bigl\{ \sign(\bw) = \bs, \llVert\bu\rrVert_\infty< 1 \bigr\} &=&
\biggl\{ \pmatrix{ A_0(\E, \bs)
\vspace{3pt}\cr
A_1(\E, \bs)} \by<
\pmatrix{ \bb_0(\E, \bs)
\vspace{3pt}\cr
\bb_1(\E, \bs) } \biggr\},
\label{eq:polyhedron}
\end{eqnarray}
where $A_0, \bb_0$ encode the ``inactive'' constraints $\{\llVert
\bu\rrVert_\infty< 1\}$, and $A_1, \bb_1$ encode the
``active'' constraints $\{
\sign(\bw) = \bs\}$. These matrices have the explicit forms
\begin{eqnarray*}
A_0(\E, \bs) &=& \frac{1}{\lambda}\pmatrix{X_{-\E}^T
(I - P_\E)
\vspace{3pt}\cr
- X_{-\E}^T (I - P_\E)
},
\\[3pt]
\bb_0(\E, \bs) &=& \pmatrix{\ones- X_{-\E}^T
\bigl(X_\E^T\bigr)^\dagger\bs
\vspace{3pt}\cr
\ones+
X_{-\E}^T \bigl(X_\E^T
\bigr)^\dagger\bs},
\\[3pt]
A_1(\E, \bs) &=& -\diag(\bs) \bigl(X_\E^T
X_\E\bigr)^{-1} X_\E^T,
\\[3pt]
\bb_1(\E, s) &=& -\lambda\diag(\bs) \bigl(X_\E^T
X_\E\bigr)^{-1} \bs.
\end{eqnarray*}
\end{proposition}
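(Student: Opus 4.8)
The plan is to treat the two constraint families separately—the ``inactive'' bound $\norm{\bu}_\infty < 1$ and the ``active'' sign condition $\sign(\bw) = \bs$—and in each case substitute the affine expressions \eqref{eq:beta-active}--\eqref{eq:inactive_subgradient} and collect the terms depending on $\by$ on the left-hand side. Since $\bw$ and $\bu$ are already affine functions of $\by$, each condition reduces to a finite list of strict linear inequalities in $\by$, so that the whole computation amounts to reading off the coefficient matrices and right-hand sides; no structural argument beyond this is needed.

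For the inactive constraints, write $\norm{\bu}_\infty < 1$ as the two componentwise inequalities $\bu < \ones$ and $-\bu < \ones$. Substituting $\bu = X_{-\E}^T(X_\E^T)^\dagger\bs + \tfrac{1}{\lambda} X_{-\E}^T(I - P_\E)\by$ and moving the $\by$-free term to the right gives $\tfrac{1}{\lambda} X_{-\E}^T(I-P_\E)\by < \ones - X_{-\E}^T(X_\E^T)^\dagger\bs$ and $-\tfrac{1}{\lambda} X_{-\E}^T(I-P_\E)\by < \ones + X_{-\E}^T(X_\E^T)^\dagger\bs$. Stacking these two blocks is exactly $A_0(\E,\bs)\by < \bb_0(\E,\bs)$ for the stated $A_0,\bb_0$.

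For the active constraints, note that since $\bs \in \{-1,1\}^{|\E|}$ the requirement $\sign(w_i) = s_i$ is equivalent to $s_i w_i > 0$—the value $\sign(0) = 0$ is automatically excluded—so $\{\sign(\bw) = \bs\} = \{\diag(\bs)\bw > 0\} = \{-\diag(\bs)\bw < 0\}$ componentwise, with no boundary or general-position subtlety entering here. Substituting $\bw = (X_\E^T X_\E)^{-1}(X_\E^T\by - \lambda\bs)$ and rearranging yields $-\diag(\bs)(X_\E^T X_\E)^{-1}X_\E^T\by < -\lambda\diag(\bs)(X_\E^T X_\E)^{-1}\bs$, i.e. $A_1(\E,\bs)\by < \bb_1(\E,\bs)$. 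Concatenating the active and inactive blocks gives \eqref{eq:polyhedron}.

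I do not expect a genuine obstacle: the argument is mechanical. The only points needing a little care are (i) carrying every inequality through as strict, and recording that $\sign(\bw) = \bs$ needs no closed (non-strict) counterpart because the sign values $\pm 1$ keep $\bw$ off the coordinate hyperplanes; and (ii) the identities $(X_\E^T X_\E)^{-1}X_\E^T = X_\E^\dagger$ and $(X_\E^T)^\dagger = X_\E(X_\E^T X_\E)^{-1}$, which hold because general position forces $X_\E$ to have full column rank, so that the expression for $\bu$ in \eqref{eq:inactive_subgradient} and the term $X_{-\E}^T(X_\E^T)^\dagger\bs$ appearing in $\bb_0$ are consistent. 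Combined with Lemma~\ref{lem:equiv_sets}, this establishes that $\{(\hat\E,\hat\bs)=(\E,\bs)\}$ is the polyhedron $\{A(\E,\bs)\by < \bb(\E,\bs)\}$.
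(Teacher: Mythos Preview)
Your proposal is correct and follows essentially the same approach as the paper: both split into the active block $\{\sign(\bw)=\bs\}=\{\diag(\bs)\bw>0\}$ and the inactive block $\{\|\bu\|_\infty<1\}=\{-\ones<\bu<\ones\}$, substitute the affine expressions for $\bw$ and $\bu$, and read off $A_0,A_1,\bb_0,\bb_1$. Your added remarks on strictness and on the pseudoinverse identities under full column rank are sound but not needed beyond what the paper uses.
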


\begin{pf}
First, substituting expression \eqref{eq:beta-active} for $\bw$, we
rewrite the ``active'' constraints as
\begin{eqnarray*}
\bigl\{ \sign(\bw) = \bs\bigr\} &=& \bigl\{ \diag(\bs) \bw> 0 \bigr\}
\\
&=& \bigl\{ \diag(\bs) \bigl(X_\E^T X_\E
\bigr)^{-1}\bigl(X_\E^T \by- \lambda\bs\bigr) > 0
\bigr\}
\\
&=& \bigl\{ A_1(\E, \bs) \by< \bb_1(\E, \bs) \bigr\}.
\end{eqnarray*}

Next, substituting expression \eqref{eq:inactive_subgradient} for $\bu
$, we rewrite the ``inactive'' constraints as
\begin{eqnarray*}
\bigl\{ \llVert\bu\rrVert_\infty< 1 \bigr\} &=& \biggl\{ -\ones<
X_{-\E}^T\bigl(X_\E^T
\bigr)^\dagger\bs+ \frac{1}{\lambda} X_{-\E}^T (I -
P_{\E}) \by< \ones\biggr\}
\\
&=& \bigl\{ A_0(\E, \bs) \by< \bb_0(\E, \bs) \bigr\}.
\end{eqnarray*}\upqed
\end{pf}

Combining Lemma \ref{lem:equiv_sets} with Proposition \ref{prop:A_b},
we obtain the following.

%
\begin{theorem}\label{thm:lasso_partition}
Let\vspace*{2pt} $A(\E, \bs) = { A_0(\E, \bs) \choose
A_1(\E, \bs) }$ and $b(\E, \bs) = { \bb_0(\E, \bs) \choose
\bb_1(\E, \bs) }$, where $A_i$ and $b_i$ are defined in Proposition
\ref{prop:A_b}. Then
\[
\displaystyle\{ \hat\E= \E, \hat\bs= \bs\} = \bigl\{ A(\E, \bs)\by\leq
\bb(\E,
\bs) \bigr\}.
\]
\end{theorem}

As a corollary, $\{ \hat\E= \E\}$ is simply the union of the above
events over all possible sign patterns.

%
\begin{corollary}
$\{ \hat\E= \E\} = \bigcup_{\bs\in\{-1, 1\}^{\llvert \E
\rrvert }}\{ A(\E, \bs)\by\leq\bb(\E, \bs) \}$.
\label{cor:lasso_partition}
\end{corollary}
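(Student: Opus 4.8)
The plan is to obtain Corollary~\ref{cor:lasso_partition} from Theorem~\ref{thm:lasso_partition} by decomposing $\{\hat\E = \E\}$ according to the possible sign patterns of the equicorrelation variables. The guiding observation is that fixing the model $\E$ amounts to fixing the model together with one of the finitely many admissible sign vectors $\bs \in \{-1,1\}^{|\E|}$, so a union over $\bs$ reassembles the event.

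Concretely, I would first record two things. First, by the definition~\eqref{eq:equicor} of the equicorrelation set, every $i \in \hat\E$ has $|\hat s_i| = 1$, so on $\{\hat\E = \E\}$ the restricted subgradient $\hat\bs_{\hat\E}$ is a genuine sign vector, i.e.\ an element of $\{-1,1\}^{|\E|}$. Second, under the general position assumption (the hypothesis of Lemma~\ref{lem:equiv_sets}) the lasso fit is unique, hence $\hat\bs$ is a deterministic function of $\by$ and takes exactly one value on $\{\hat\E = \E\}$. These two facts give the disjoint decomposition
\[
\{\hat\E = \E\} \;=\; \bigsqcup_{\bs \in \{-1,1\}^{|\E|}} \bigl\{\hat\E = \E,\ \hat\bs = \bs\bigr\}.
\]
Then I would apply Theorem~\ref{thm:lasso_partition} to each term, replacing $\{\hat\E = \E,\ \hat\bs = \bs\}$ by the polyhedron $\{A(\E,\bs)\by \le \bb(\E,\bs)\}$, which immediately yields
\[
\{\hat\E = \E\} \;=\; \bigcup_{\bs \in \{-1,1\}^{|\E|}} \bigl\{A(\E,\bs)\by \le \bb(\E,\bs)\bigr\},
\]
the asserted identity.

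I do not expect a genuine obstacle: essentially all the work is already in Theorem~\ref{thm:lasso_partition}, and the only thing to verify is that the index set of the union is correct — that $\hat\bs_{\hat\E}$ is well defined and $\{-1,1\}$-valued — which is exactly the content of the two facts above. If one preferred not to invoke uniqueness of the lasso solution, the identity can instead be read off directly from Lemma~\ref{lem:equiv_sets}: $\{\hat\E = \E\}$ is precisely the set of $\by$ for which there exists some $\bs$ with $\sign(\bw(\E,\bs)) = \bs$ and $\|\bu(\E,\bs)\|_\infty < 1$, so taking the union over $\bs$ of the corresponding polyhedra from Proposition~\ref{prop:A_b} gives the stated formula (with the union then not necessarily disjoint).
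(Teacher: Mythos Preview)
Your proposal is correct and matches the paper's approach exactly: the paper treats this as an immediate consequence of Theorem~\ref{thm:lasso_partition}, obtained by taking the union over all sign patterns $\bs \in \{-1,1\}^{|\E|}$, without giving a separate proof. Your additional care about well-definedness of $\hat\bs_{\hat\E}$ and uniqueness is more than the paper provides but entirely appropriate.
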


Figure~\ref{fig:lasso_partition} illustrates Theorem~\ref
{thm:lasso_partition} and Corollary~\ref{cor:lasso_partition}. The
lasso partitions of $\R^n$ into polyhedra according to the model it
selects and the signs of the coefficients. The shaded area corresponds
to the event $\{\hat M = \{1, 3\}\}$, which is a union of two
polyhedra. Notice that the sign patterns $\{+, -\}$ and $\{-, +\}$ are
not possible for the model $\{1, 3\}$.

%
\begin{figure}

\includegraphics{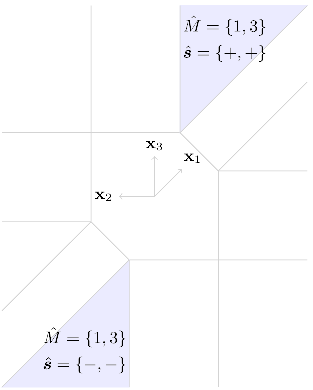}

\caption{A geometric picture illustrating Theorem \protect\ref
{thm:lasso_partition} for $n=2$ and $p = 3$. The lasso partitions $\R
^n$ into polyhedra according to the selected model and signs.}
\label{fig:lasso_partition}
\end{figure}


\section{Polyhedral conditioning sets}
\label{sec:polyhedra}

In order to obtain inference conditional on the model, we need to
understand the distribution of
\[
\boldeta_\E^T \by\mid\{ \hat\E= \E\}.
\]
However, as we saw in the previous section, $\{ \hat\E= \E\}$ is a
union of polyhedra, so it is easier to condition on both the model {\em
and the signs},
\[
\boldeta_\E^T \by\mid\{ \hat\E= \E, \hat\bs= \bs\},
\]
since the conditioning event is a single polyhedron $\{ A(\E, \bs)\by
\leq\bb(\E, \bs)\}$. Notice that inferences that are valid
conditional on this finer event will also be valid conditional on $\{
\hat\E= \E\}$. For example, if a confidence interval $C^\E_j$ for
$\beta^\E_j$ has $(1-\alpha)$ coverage conditional on the model and signs
\[
\Pp\bigl(\beta^\E_j \in C^\E_j
\mid\hat\E= \E, \hat\bs= \bs\bigr) \geq1-\alpha,
\]
it will also have $(1-\alpha)$ coverage conditional only on the model by the Law of Total Probability:
\begin{eqnarray*}
\Pp\bigl(\beta^\E_j \in C^\E_j
\mid\hat\E= \E\bigr) &=& \sum_{\bs} \Pp\bigl(
\beta^\E_j \in C^\E_j\mid\hat
\E= \E, \hat\bs= \bs\bigr) \Pp(\hat\bs= \bs\mid\hat\E= \E)
\\[-2pt]
&\geq&\sum_{\bs} (1-\alpha) \Pp(\hat\bs= \bs\mid\hat
\E= \E)
\\[-2pt]
&=& 1-\alpha.
\end{eqnarray*}

This section is divided into two subsections. First, we study how to
condition on a single polyhedron; this will allow us to condition on
$\{ \hat\E= \E, \hat\bs= \bs\}$. Then we extend
the framework to condition on a union of polyhedra, which will allow us
to condition only on the model $\{ \hat\E= \E\}$. The inferences
obtained by conditioning on the model will in general be more efficient
(i.e., narrower intervals, more powerful tests), at the price of more
computation.

\subsection{Conditioning on a single polyhedron}

Suppose we observe $\by\sim N(\mathbfu,\break  \Sigma)$, and $\boldeta\in
\R
^n$ is some direction of interest. To understand the distribution of
%
\begin{equation}
\boldeta^T \by\mid\{ A\by\leq\bb\},
\end{equation}
we rewrite $\{ A\by\leq\bb\}$ in terms of $\boldeta^T \by$ and a
component $\bz$ which is independent of $\boldeta^T \by$. That
component is
%
\begin{equation}
\bz\equiv\bigl(I_n - \bc\boldeta^T\bigr)\by,
\label{eq:z}
\end{equation}
where
%
\begin{equation}
\bc\equiv\Sigma\boldeta\bigl(\boldeta^T \Sigma\boldeta
\bigr)^{-1}. \label{eq:c}
\end{equation}
It is easy to verify that $\bz$ is uncorrelated with, and hence
independent of, $\boldeta^T \by$.
Notice that in the case where $\Sigma= \sigma^2 I_n$, $\bz $ is simply the
residual $(I_n - P_{\boldeta})\by$ from projecting $\by$ onto
$\boldeta$.


We can now rewrite $\{A\by\leq\bb\}$ in terms of $\boldeta^T \by$
and $\bz$.


%
\begin{lemma}
\label{lem:conditional}
Let $\bz$ be defined as in \eqref{eq:z} and $\bc$ as in \eqref
{eq:c}. Then the conditioning set can be rewritten as follows:
\[
\{A\by\leq\bb\} = \bigl\{\V^-(\bz) \leq\boldeta^T \by\leq\V^+(\bz),
\V^0(\bz) \geq0 \bigr\},
\]
where
%
\begin{eqnarray}
\V^-(\bz) &\equiv&\max_{j: (A\bc)_j < 0} \frac{b_j - (A\bz)_j}{
(A\bc)_j},
\label{eq:v_minus}
\\[-2pt]
\V^+(\bz) &\equiv&\min_{j: (A\bc)_j > 0} \frac{b_j - (A\bz
)_j}{(A\bc)_j}, \label{eq:v_plus}
\\[-2pt]
\V^0(\bz) &\equiv&\min_{j: (A\bc)_j = 0} b_j - (A
\bz)_j. \label{eq:v_zero}
\end{eqnarray}
Note that $\V^-$, $\V^+$, and $\V^0$ refer to functions. Since they
are functions of $\bz$ only, \eqref{eq:v_minus}--\eqref{eq:v_zero}
are independent of $\boldeta^T \by$.\vadjust{\goodbreak}
\end{lemma}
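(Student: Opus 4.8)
The plan is to exploit the orthogonal decomposition $\by = \bc\,(\boldeta^T\by) + \bz$, which holds immediately from the definition of $\bz$ in \eqref{eq:z} since $\bc\boldeta^T\by + (I_n - \bc\boldeta^T)\by = \by$. First I would record the two elementary identities that make this decomposition consistent: $\boldeta^T\bc = 1$, immediate from \eqref{eq:c}, and $\boldeta^T\bz = 0$, which follows because $\boldeta^T(I_n - \bc\boldeta^T) = \boldeta^T - (\boldeta^T\bc)\boldeta^T = 0$. Writing $w := \boldeta^T\by$ for brevity, we then have $\by = w\bc + \bz$, hence $A\by = w(A\bc) + A\bz$, and the event $\{A\by \leq \bb\}$ is exactly the set of $(w,\bz)$ for which $w(A\bc)_j \leq b_j - (A\bz)_j$ for every row index $j$.

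The core of the argument is a row-by-row case split on the sign of $(A\bc)_j$. For indices with $(A\bc)_j > 0$, dividing gives $w \leq (b_j - (A\bz)_j)/(A\bc)_j$, and intersecting these constraints yields $w \leq \V^+(\bz)$. For indices with $(A\bc)_j < 0$, dividing flips the inequality, giving $w \geq (b_j - (A\bz)_j)/(A\bc)_j$, so the intersection yields $w \geq \V^-(\bz)$. For indices with $(A\bc)_j = 0$, the $w$ term drops out and the constraint reads $(A\bz)_j \leq b_j$, i.e. $b_j - (A\bz)_j \geq 0$; intersecting over such $j$ is precisely $\V^0(\bz) \geq 0$. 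Since these three classes partition the rows of $A$, the conjunction of all the original constraints is equivalent to $\{\V^-(\bz) \leq w \leq \V^+(\bz)\} \cap \{\V^0(\bz) \geq 0\}$, which is the claimed identity. One should adopt here the usual conventions that a minimum over an empty index set is $+\infty$ and a maximum over an empty index set is $-\infty$, so the statement stays correct when one or more of the three row-classes is empty.

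I do not expect a genuine obstacle: the entire content is this sign-based case analysis, and the only places to slip are mishandling the empty-index-set edge cases or the direction of the inequality when $(A\bc)_j < 0$, so the ``hard part'' is really just bookkeeping. It is worth noting in passing, though it is not needed for this particular identity, that $\bz$ and $w = \boldeta^T\by$ are jointly Gaussian with $\cov(\bz,\boldeta^T\by) = (I_n - \bc\boldeta^T)\Sigma\boldeta = \Sigma\boldeta - \bc(\boldeta^T\Sigma\boldeta) = 0$, hence independent; this is what lets the later sections treat $\V^-,\V^+,\V^0$ as constants when working with the conditional law of $\boldeta^T\by$.
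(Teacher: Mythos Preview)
Your proposal is correct and follows essentially the same route as the paper: write $\by = \bc(\boldeta^T\by) + \bz$, substitute into $A\by \leq \bb$, and split the rows by the sign of $(A\bc)_j$ to obtain the three functions $\V^-,\V^+,\V^0$. The extra remarks you include (the identities $\boldeta^T\bc = 1$ and $\boldeta^T\bz = 0$, the empty-index-set conventions, and the covariance calculation showing $\bz \perp \boldeta^T\by$) are not needed for the set identity itself but are harmless and, in the last case, anticipate what the paper uses immediately afterward.
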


%
\begin{figure}

\includegraphics{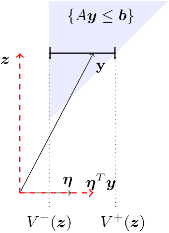}

%
%
%
%
%
%
%
%
\caption{A geometric interpretation of why the event $ \{A\by
\leq\bb\}$ can be characterized as $\{ \V^-(\bz) \leq \bolds{\eta}^{T} \mathbf{y} \leq\V^+(\bz)\}$. Assuming $\Sigma= I$ and $\llVert\bolds{\eta}
\rrVert_2 = 1$,
$\V^-(\bz)$ and $\V^+(\bz)$ are functions of $\bz$ only, which is
independent of $\boldeta^T \by$.}
\label{fig:polyhedron}
\end{figure}

\begin{pf}
We can decompose $\by= \bc(\boldeta^T \by) + \bz$ and rewrite the
polyhedron as
\begin{eqnarray*}
\{ A \by\leq\bb\} &=& \bigl\{ A\bigl(\bc\bigl(\boldeta^T \by\bigr) +
\bz\bigr) \leq\bb\bigr\}
\\
&=& \bigl\{ A\bc\bigl(\boldeta^T \by\bigr) \leq\bb- A \bz\bigr\}
\\
&=& \bigl\{ (A\bc)_j \bigl(\boldeta^T \by\bigr) \leq
b_j - (A\bz)_j \mbox{ for all $j$} \bigr\}
\\
&=& \left.\cases{
\displaystyle\boldeta^T \by\leq\frac{b_j - (A\bz)_j}{(A\bc)_j}, &\quad for $j: (A\bc)_j > 0$,
\vspace*{3pt}\cr
\displaystyle \boldeta^T \by\geq \frac{b_j - (A\bz)_j}{(A\bc)_j}, &\quad for $j: (A\bc)_j < 0$,
\vspace*{3pt}\cr
\displaystyle 0 \leq b_j - (A\bz)_j, &\quad for $j: (A\bc)_j = 0$} \right\},
\end{eqnarray*}
where in the last step, we have divided the components into three
categories depending on whether $(A\bc)_j \gtreqless0$, since this
affects the direction of the inequality (or whether we can divide at
all). Since $\boldeta^T \by$ is the same quantity for all $j$,
it must be at least the maximum of the lower bounds, which is
$\V ^-(\bz)$, and no more than the minimum of the upper bounds, which is $\V ^+(\bz)$.
\end{pf}

Lemma \ref{lem:conditional} tells us that
%
\begin{equation}
\bigl[\boldeta^T \by\mid\{A\by\leq\bb\} \bigr] \overset{d} {=}
\bigl[\boldeta^T \by\mid\bigl\{\V^-(\bz) \leq\boldeta^T
\by\leq\V^+(\bz), \V^0(\bz) \geq0\bigr\} \bigr].
\end{equation}
Since $\V^+(\bz),\V^-(\bz), \V^0(\bz)$ are independent of
$\boldeta^T \by$, they behave as ``fixed'' quantities. Thus,
$\boldeta^T\by$ is conditionally like a normal random variable,
truncated to be between $\V^-(\bz)$ and $\V^+(\bz)$. We would like
to be able to say
\[
\mbox{``}\boldeta^T \by\mid\{A\by\leq\bb\} \sim \mathrm{TN}\bigl(
\boldeta^T \mathbfu, \sigma^2 \boldeta^T
\Sigma\boldeta, \V^-(\bz), \V^+(\bz)\bigr)\mbox{,''}
\]
but this is technically incorrect, since the distribution on the
right-hand side changes with $\bz$. By conditioning on the value of
$\bz$, $\boldeta^T \by\mid\{A\by\leq\bb, \bz=\bz_0\}$ is a
truncated normal. We can then use the probability integral transform to
obtain a statistic $F^\bz(\boldeta^T\by)$ that has a $\unif(0,1)$
distribution for any value of $\bz$. Hence, $F^\bz(\boldeta^T\by)$
will also have a $\unif(0,1)$ distribution marginally over $\bz$. We
make this precise in the next theorem.

%
\begin{theorem}
\label{thm:truncated-gaussian-pivot}
Let $F_{\mu, \sigma^2}^{[a, b]}$ denote the CDF of a $N(\mu, \sigma
^2)$ random variable truncated to the interval $[a, b]$, that is,
%
\begin{equation}
F_{\mu, \sigma^2}^{[a, b]}(x) = \frac{\Phi((x-\mu)/\sigma) - \Phi
((a-\mu)/\sigma)}{\Phi((b-\mu)/\sigma) - \Phi((a-\mu)/\sigma)}, \label{eq:U}
\end{equation}
where $\Phi$ is the CDF of a $N(0, 1)$ random variable. Then
%
\begin{equation}
F_{\boldeta^T\mathbfu, \boldeta^T \Sigma\boldeta}^{[\V^-(\bz),
\V
^+(\bz)]}\bigl(\boldeta^T \by\bigr)\mid\{A\by
\leq\bb\} \sim\unif(0,1), \label{eq:pivot}
\end{equation}
where $\V^-$ and $\V^+$ are defined in \eqref{eq:v_minus} and \eqref
{eq:v_plus}. Furthermore,
\[
\bigl[\boldeta^T \by\mid A\by\leq\bb, \bz= \bz_0 \bigr]
\sim \mathrm{TN}\bigl(\boldeta^T\mathbfu, \sigma^2 \llVert\boldeta
\rrVert^2, \V^-(\bz_0), \V^+(\bz_0)\bigr).
\]
\end{theorem}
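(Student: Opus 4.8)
The plan is to collapse the multivariate conditioning event to a one-dimensional truncation using Lemma~\ref{lem:conditional}, read off the ``Furthermore'' assertion directly from that, and then obtain the pivot \eqref{eq:pivot} by the probability integral transform followed by an iterated-expectation argument over $\bz$.

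First I would record the independence that drives everything: $(\boldeta^T\by,\bz)$ is jointly Gaussian, and since $\bc^T = (\boldeta^T\Sigma\boldeta)^{-1}\boldeta^T\Sigma$,
\[ \cov(\boldeta^T\by,\bz) = \boldeta^T\Sigma(I_n - \bc\boldeta^T)^T = \boldeta^T\Sigma - (\boldeta^T\Sigma\boldeta)\,\bc^T = 0, \]
so $\bz$ is independent of $\boldeta^T\by$, which is marginally $N(\boldeta^T\bmu,\boldeta^T\Sigma\boldeta)$. Now fix $\bz_0$ in the support of $\bz$ with $\V^0(\bz_0)\ge 0$ and $\V^-(\bz_0)<\V^+(\bz_0)$. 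By Lemma~\ref{lem:conditional}, on $\{\bz=\bz_0\}$ the event $\{A\by\le\bb\}$ reduces to $\{\V^-(\bz_0)\le\boldeta^T\by\le\V^+(\bz_0)\}$, the constraint $\V^0(\bz_0)\ge 0$ holding automatically. Because $\boldeta^T\by\perp\bz$, conditioning on $\{\bz=\bz_0\}$ leaves the $N(\boldeta^T\bmu,\boldeta^T\Sigma\boldeta)$ law of $\boldeta^T\by$ intact, and the interval constraint simply truncates it, giving
\[ \boldeta^T\by \,\big|\, \{A\by\le\bb,\ \bz=\bz_0\}\ \sim\ TN\!\big(\boldeta^T\bmu,\ \boldeta^T\Sigma\boldeta,\ \V^-(\bz_0),\ \V^+(\bz_0)\big), \]
which is the second assertion (with $\boldeta^T\Sigma\boldeta=\sigma^2\norm{\boldeta}^2$ when $\Sigma=\sigma^2 I_n$). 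By definition \eqref{eq:U}, $F_{\boldeta^T\bmu,\ \boldeta^T\Sigma\boldeta}^{[\V^-(\bz_0),\ \V^+(\bz_0)]}$ is exactly the (continuous, and strictly increasing on its support) CDF of this truncated normal, so the probability integral transform yields
\[ F_{\boldeta^T\bmu,\ \boldeta^T\Sigma\boldeta}^{[\V^-(\bz),\ \V^+(\bz)]}(\boldeta^T\by)\ \big|\ \{A\by\le\bb,\ \bz=\bz_0\}\ \sim\ \unif(0,1). \]
To finish, integrate out $\bz$: for $t\in[0,1]$, writing $G\equiv F_{\boldeta^T\bmu,\ \boldeta^T\Sigma\boldeta}^{[\V^-(\bz),\ \V^+(\bz)]}(\boldeta^T\by)$,
\[ \Pp\!\big(G\le t \,\big|\, A\by\le\bb\big) = \Ee\!\big[\,\Pp(G\le t \mid A\by\le\bb,\ \bz)\ \big|\ A\by\le\bb\,\big] = \Ee\!\big[\,t \mid A\by\le\bb\,\big] = t, \]
the inner conditional probability being $t$ for almost every $\bz$ under the conditional law of $\bz$ given $\{A\by\le\bb\}$.

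The points needing care are: (i) that $\{\bz:\V^-(\bz)<\V^+(\bz),\ \V^0(\bz)\ge 0\}$ has conditional probability one given $\{A\by\le\bb\}$, which follows from Lemma~\ref{lem:conditional} because membership in $\{A\by\le\bb\}$ forces $\V^-(\bz)\le\boldeta^T\by\le\V^+(\bz)$ and $\boldeta^T\by$ has a density (and is independent of the $\bz$-measurable bounds), so $\V^-(\bz)=\V^+(\bz)$ occurs with probability zero; (ii) justifying the disintegration, i.e.\ that successively conditioning on $\{A\by\le\bb\}$ and then on $\bz$ agrees with joint conditioning, which is routine since $\{A\by\le\bb\}$ is $\sigma(\bz,\boldeta^T\by)$-measurable and, as is implicitly assumed, $\Pp(A\by\le\bb)>0$; and (iii) the bookkeeping of degenerate index sets in \eqref{eq:v_minus}--\eqref{eq:v_zero}, where $\V^+=+\infty$ or $\V^-=-\infty$ and the formula \eqref{eq:U} remains meaningful via $\Phi(\pm\infty)\in\{0,1\}$. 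I expect (i)--(ii) to be the only genuine obstacle, and a mild one; the rest is just the probability integral transform applied conditionally and then averaged.
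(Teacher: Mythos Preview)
Your proposal is correct and follows essentially the same route as the paper: apply Lemma~\ref{lem:conditional} to collapse the polyhedral event to an interval constraint on $\boldeta^T\by$, use independence of $\boldeta^T\by$ and $\bz$ to obtain the truncated normal conditional on $\bz=\bz_0$, apply the probability integral transform, and then average over $\bz$. Your extra care points (i)--(iii) go slightly beyond what the paper spells out, but the core argument is identical.
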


\begin{pf}
First, apply Lemma \ref{lem:conditional}:
\begin{eqnarray*}
\bigl[\boldeta^T \by\mid A\by\leq\bb, \bz= \bz_0 \bigr]
&\overset{d} {=}& \bigl[\boldeta^T \by\mid\V^-(\bz) \leq
\boldeta^T\by\leq\V^+(\bz), \V^0(\bz)\geq0, \bz=
\bz_0 \bigr]
\\
&\overset{d} {=}& \bigl[\boldeta^T \by\mid\V^-(\bz_0)
\leq\boldeta^T\by\leq\V^+(\bz_0), \V^0(
\bz_0)\geq0, \bz= \bz_0 \bigr].
\end{eqnarray*}
The only random quantities left are $\boldeta^T \by$ and $\bz$. Now
we can eliminate $\bz= \bz_0$ from the condition using independence:
\begin{eqnarray*}
\bigl[\boldeta^T \by\mid A\by\leq\bb, \bz= \bz_0 \bigr]
&\overset{d} {=} &\bigl[\boldeta^T \by\mid\V^-(\bz_0)
\leq\boldeta^T\by\leq\V^+(\bz_0) \bigr]
\\
&\sim& \mathrm{TN}\bigl(\boldeta^T\mathbfu, \sigma^2 \llVert
\boldeta\rrVert^2, \V^-(\bz_0), \V^+(\bz_0)
\bigr).
\end{eqnarray*}

Letting $F^\bz(\boldeta^T \by) \equiv F_{\boldeta^T\mathbfu, \boldeta
^T \Sigma\boldeta}^{[\V^-(\bz), \V^+(\bz)]}(\boldeta^T \by)$,
we can apply the probability integral transform to the above result to obtain
\begin{eqnarray*}
\bigl[F^{\bz}\bigl(\boldeta^T \by\bigr) \mid A\by\leq\bb,
\bz=\bz_0 \bigr] &\overset{d} {=} &\bigl[F^{\bz_0}\bigl(
\boldeta^T \by\bigr) \mid A\by\leq\bb, \bz=\bz_0 \bigr]
\\
&\sim&\unif(0,1).
\end{eqnarray*}
If we let $p_X$ denote the density of a random variable $X$ given $\{
A\by\leq\bb\}$, what we have just shown is that
\[
p_{F^\bz(\boldeta^T \by) \mid\bz}(t \mid\bz_0) \equiv\frac
{p_{F^\bz
(\boldeta^T \by), \bz}(t, \bz_0)}{p_{\bz}(\bz_0)} =
1_{[0,1]}(f)
\]
for any $\bz_0$. The desired result now follows by integrating over
$\bz_0$:
\begin{eqnarray*}
p_{F^\bz(\boldeta^T \by)}(t) &=& \int p_{F^\bz(\boldeta^T \by)\mid
\bz
}(t\mid\bz_0)
p_{\bz}(\bz_0) \,d\bz_0
\\
&=& \int1_{[0,1]}(t) p_{\bz}(\bz_0) \,d
\bz_0
\\
&=& 1_{[0,1]}(t).
\end{eqnarray*}\upqed
\end{pf}

\subsection{Conditioning on a union of polyhedra}
\label{sec:union}

We have just characterized the distribution of $\boldeta^T \by$,
conditional on $\by$ falling into a single polyhedron $\{ A\by\leq
\bb\}$. We obtain such a polyhedron if we condition on both the model
and the signs $\{ \hat\E= \E, \hat\bs= \bs\}$. If we want to only
condition on the model $\{ \hat\E= \E\}$, then we will have to
understand the distribution of $\boldeta^T \by$, conditional on $\by
$ falling into a union of such polyhedra, that is,
%
\begin{equation}
\boldeta^T \by\big| \bigcup_\bs\{
A_\bs\by\leq\bb_\bs\}.
\end{equation}

%
\begin{figure}

\includegraphics{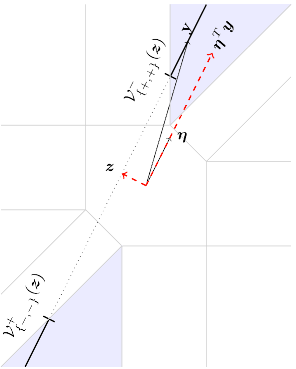}

%
%
%
%
%
%
%
%
%
%
%
\caption{When we take the union over signs, the conditional
distribution of $\boldeta^T \by$ is truncated to a union of disjoint
intervals. In this case, the Gaussian is truncated to the set $(-\infty
, \V^+_{\{-, -\}}(\bz)] \cup[\V^-_{\{+, +\}}(\bz), \infty)$.}
\label{fig:union}
\end{figure}

As Figure~\ref{fig:union} makes clear, the argument proceeds exactly
as before, except that $\boldeta^T \by$ is now truncated to a union
of intervals, instead of a single interval. There is a $\V^-$ and a
$\V^+$ for each possible sign pattern $\bs$, so we index the
intervals by the signs. This leads immediately to the next theorem,
whose proof is essentially the same as that of Theorem \ref
{thm:truncated-gaussian-pivot}.

%
\begin{theorem}
\label{thm:union}
Let $F_{\mu, \sigma^2}^S$ denote the CDF of a $N(\mu, \sigma^2)$
random variable truncated to the set $S$. Then
%
\begin{equation}
F_{\boldeta^T\mathbfu, \boldeta^T\Sigma\boldeta}^{\bigcup_{\bs}
[\V
_{\bs}^-(\bz), \V_{\bs}^+(\bz)]}\bigl(\boldeta^T \by\bigr)\big|
\bigcup_\bs\{ A_\bs\by\leq
\bb_\bs\} \sim\unif(0,1), \label{eq:minimal-pivotal-quantity}
\end{equation}
where $\V_{\bs}^-(\bz)$ and $\V_{\bs}^+(\bz)$ are defined in
\eqref{eq:v_minus} and \eqref{eq:v_plus} and $A = A_\bs$ and $b =
b_\bs$.
\end{theorem}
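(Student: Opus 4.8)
The plan is to follow the proof of Theorem~\ref{thm:truncated-gaussian-pivot} essentially verbatim, the only change being that the conditional support of $\boldeta^T\by$ becomes a union of intervals rather than a single interval. There are three ingredients: (1) rewrite each polyhedron in the union via Lemma~\ref{lem:conditional}; (2) condition on $\bz$ and use independence of $\bz$ and $\boldeta^T\by$ to identify the conditional law of $\boldeta^T\by$ as a normal truncated to a union of intervals; (3) apply the probability integral transform pointwise in $\bz$ and then integrate $\bz$ out.

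In detail, I would decompose $\by = \bc\,(\boldeta^T\by) + \bz$ with $\bc$ and $\bz$ as in \eqref{eq:c} and \eqref{eq:z}, so that $\bz$ is independent of $\boldeta^T\by$. By Lemma~\ref{lem:conditional} applied to each pair $(A_\bs,\bb_\bs)$,
\[ \{A_\bs\by \leq \bb_\bs\} = \{\V_\bs^-(\bz) \leq \boldeta^T\by \leq \V_\bs^+(\bz),\ \V_\bs^0(\bz) \geq 0\}, \]
where $\V_\bs^\pm,\V_\bs^0$ are the functions \eqref{eq:v_minus}--\eqref{eq:v_zero} built from $A = A_\bs$, $b = \bb_\bs$. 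Taking the union over $\bs$,
\[ \bigcup_\bs \{A_\bs\by \leq \bb_\bs\} = \{\boldeta^T\by \in S(\bz)\}, \qquad S(\bz) := \bigcup_{\bs:\ \V_\bs^0(\bz)\geq 0} [\V_\bs^-(\bz), \V_\bs^+(\bz)], \]
a set depending on $\bz$ only (the shorthand $\bigcup_\bs[\V_\bs^-(\bz),\V_\bs^+(\bz)]$ in the statement is understood with the understanding that an interval is dropped when its $\V^0$ constraint fails). Conditioning on $\bz = \bz_0$ and then discarding $\bz = \bz_0$ from the conditioning set by independence,
\[ \big[\boldeta^T\by \,\big|\, \textstyle\bigcup_\bs\{A_\bs\by\leq\bb_\bs\},\ \bz = \bz_0\big] \overset{d}{=} \big[\boldeta^T\by \,\big|\, \boldeta^T\by \in S(\bz_0)\big] \sim TN\big(\boldeta^T\bmu,\ \boldeta^T\Sigma\boldeta,\ S(\bz_0)\big), \]
i.e. an $N(\boldeta^T\bmu,\boldeta^T\Sigma\boldeta)$ variable truncated to the union of intervals $S(\bz_0)$.

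For the third step, $F^{S}_{\mu,\sigma^2}$ is, for any $S$ of positive Lebesgue measure, the CDF of an absolutely continuous distribution, hence continuous; the probability integral transform then gives
\[ \big[F^{S(\bz_0)}_{\boldeta^T\bmu,\ \boldeta^T\Sigma\boldeta}(\boldeta^T\by)\ \big|\ \textstyle\bigcup_\bs\{A_\bs\by\leq\bb_\bs\},\ \bz = \bz_0\big] \sim \unif(0,1) \]
for every $\bz_0$. Since $S(\bz)$ is a measurable function of $\bz$ and $\bz$ is almost surely such that $S(\bz)$ has positive measure, uniformity conditional on every value of $\bz$ forces uniformity marginally: one integrates the conditional density over $\bz_0$ exactly as at the end of the proof of Theorem~\ref{thm:truncated-gaussian-pivot}. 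This is \eqref{eq:minimal-pivotal-quantity}.

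The only points requiring care beyond the single-polyhedron case are bookkeeping. First, for each $\bs$ separately the direction of every inequality in Lemma~\ref{lem:conditional} is governed by the sign of $(A_\bs\bc)_j$, so $S(\bz)$ is genuinely a finite union of intervals, and the several constraints $\V^0_\bs(\bz)\geq 0$ are functions of $\bz$ alone and merely select which intervals are active. Second, one should note that $S(\bz)$ is nondegenerate almost surely, so that the truncated CDF is well defined and continuous; the degenerate cases (the line $\{\bc t + \bz : t\in\R\}$ tangent to a face of some polyhedron) form a $\bz$-null set. Overlap among the polyhedra causes no difficulty, since we condition on their union purely as a set. Thus the ``hard part'' here is essentially notational: the probabilistic content is identical to Theorem~\ref{thm:truncated-gaussian-pivot}.
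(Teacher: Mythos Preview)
Your proposal is correct and follows exactly the approach the paper intends: the paper does not give a separate proof of Theorem~\ref{thm:union} but simply states that ``the argument proceeds exactly as before, except that $\boldeta^T \by$ is now truncated to a union of intervals,'' and that the proof ``is essentially the same as that of Theorem~\ref{thm:truncated-gaussian-pivot}.'' Your write-up is in fact more careful than the paper's sketch, since you explicitly track the role of the $\V_\bs^0$ constraints in selecting which intervals appear in $S(\bz)$ and note the almost-sure nondegeneracy needed for the probability integral transform.
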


\section{Post-selection intervals for regression coefficients}
\label{sec:lasso}

In this section, we combine the characterization of the lasso selection
event in Section~\ref{sec:lasso-selection} with the results about the
distribution of a Gaussian truncated to a polyhedron (or union of
polyhedra) in Section~\ref{sec:polyhedra} to form post-selection
intervals for lasso-selected regression coefficients. The key link is
that the lasso selection event can be expressed as a union of polyhedra:
\begin{eqnarray*}
\{\hat\E= \E\} &=& \bigcup_{\bs\in\{-1, 1\}^{\llvert \E
\rrvert }} \{\hat\E= \E, \hat
\bs= \bs\}
\\
&=& \bigcup_{\bs\in\{-1, 1\}^{\llvert \E\rrvert }} \bigl\{A(\E, \bs)\by
\leq\bb(\E,
\bs)\bigr\},
\end{eqnarray*}
where $A(\E, \bs)$ and $\bb(\E, \bs)$ are defined in Theorem \ref
{thm:lasso_partition}. Therefore, conditioning on selection is the same
as conditioning on a union of polyhedra, so we can apply the framework of Section~\ref{sec:polyhedra}.

Recall that our goal is to form confidence intervals for $\beta^\E_j
= \be_j^T X_\E^\dagger\mathbfu$, with $(1-\alpha)$-coverage
conditional on $\{\hat\E= \E\}$. Taking $\boldeta= (X_\E^\dagger
)^T \be_j$, we can use Theorem \ref{thm:union} to obtain
\[
F^{\bigcup_\bs[\V_\bs^-(\bz), \V_\bs^+(\bz)]}_{\beta^\E_j,
\sigma^2 \llVert\boldeta\rrVert^2}\bigl(\boldeta^T \by\bigr)\mid\{\hE=
\E\} \sim\unif(0,1).
\]
This gives us a test statistic for testing any hypothesized value of
$\bbeta^\E_j$. We can invert this test to obtain a confidence set
%
\begin{equation}
C^\E_j \equiv\biggl\{ \beta^\E_j:
\frac{\alpha}{2} \leq F^{\bigcup_\bs[\V_\bs^-(\bz), \V_\bs^+(\bz
)]}_{\beta^\E_j,
\sigma^2 \llVert\boldeta\rrVert^2}\bigl(
\boldeta^T \by\bigr) \leq1-\frac{\alpha}{2} \biggr\}.
\label{eq:conf_int}
\end{equation}
In fact, the set $C^\E_j$ is an {\em interval}, as formalized in the
next result.

%
\begin{theorem}\label{thm:conf_int}
Let $\boldeta= (X_\E^\dagger)^T \be_j$. Let $L$ and $U$ be the
(unique) values satisfying
\begin{eqnarray*}
F_{L, \sigma^2 \llVert\boldeta\rrVert^2}^{\bigcup_\bs
[\V^-_\bs(\bz), \V
^+_\bs(\bz)]}\bigl(\boldeta^T \by\bigr) &=& 1-
\frac{\alpha}{2},\qquad F_{U, \sigma
^2 \llVert\boldeta\rrVert^2}^{\bigcup_\bs[\V^-_\bs(\bz
), \V^+_\bs(\bz
)]}\bigl(\boldeta^T \by
\bigr) = \frac{\alpha}{2}.
\end{eqnarray*}
Then $[L, U]$ is a $(1-\alpha)$ confidence interval for $\beta^\E
_j$, conditional on $\{ \hat\E= \E\}$, that is,
%
\begin{equation}
\label{eq:coverage} \Pp\bigl( \beta^\E_j \in[L, U]\mid\hat
\E= \E\bigr) = 1-\alpha.
\end{equation}
\end{theorem}

\begin{pf}
By construction, $\Pp_{\beta^\E_j}(\beta^\E_j \in C^\E_j | \hat
\E= \E) = 1-\alpha$, where $C^\E_j$ is defined in \eqref
{eq:conf_int}. The claim is that the set $C^\E_j$ is in fact the
interval $[L, U]$. To see this, we need to show that the test statistic
$F_{L, \sigma^2 \llVert\boldeta\rrVert^2}^{\bigcup_\bs
[\V^-_\bs(\bz), \V
^+_\bs(\bz)]}(\boldeta^T \by)$ is monotone decreasing in $\beta^\E
_j$ so that it crosses $1-\frac{\alpha}{2}$ and $\frac{\alpha}{2}$
at unique values. This follows from the fact that the truncated
Gaussian distribution has monotone likelihood ratio in the mean
parameter. See the \hyperref[appendix:monotone]{Appendix} for details.
\end{pf}

Alternatively, we could have conditioned on the signs, in addition to
the model, so that we would only have to condition on a
single polyhedron. We also showed in Section~\ref{sec:polyhedra} that
\[
F^{[\V^-_\bs(\bz), \V^+_\bs(\bz)]}_{\bbeta^\E_j, \sigma^2
\llVert\boldeta\rrVert^2}\bigl(\boldeta^T \by\bigr)\mid\{\hE=
\E, \hat\bs= \bs\} \sim\unif(0,1).
\]
Inverting this statistic will produce intervals that have $(1-\alpha)$
coverage conditional on $\{\hE= \E, \hat\bs= \bs\}$, and hence
$(1-\alpha)$ coverage conditional on $\{\hE= \E\}$. However, these
intervals will be less efficient; they will in general be wider.
However, one may be willing to sacrifice statistical efficiency for
computational efficiency. Notice that the main cost in computing
intervals according to Theorem \ref{thm:conf_int} is determining the
intervals $[\V^-_\bs(\bz), \V^+_\bs(\bz)]$ for each $\bs\in\{
-1, 1\}^{\llvert \E\rrvert }$. The number of such sign
patterns is $2^{\llvert \E\rrvert }$.
While this might be feasible when $\llvert \E\rrvert $ is
small, it is
not feasible when we select hundreds of variables. Conditioning on the
signs means that we only have to compute the interval $[\V^-_\bs(\bz
), \V^+_\bs(\bz)]$ for the sign pattern $\bs$ that was actually observed.

Figure~\ref{fig:ci_comparison} shows the tradeoff in statistical
efficiency. When the signal is strong, as in the left-hand plot, there
is virtually no difference between the intervals obtained by
conditioning on just the model, or the model and signs. On the other
hand, in the right-hand plot, we see that we can obtain very wide
intervals when the signal is weak. The widest intervals are for actual
noise variables, as expected.

%
\begin{figure}

\includegraphics{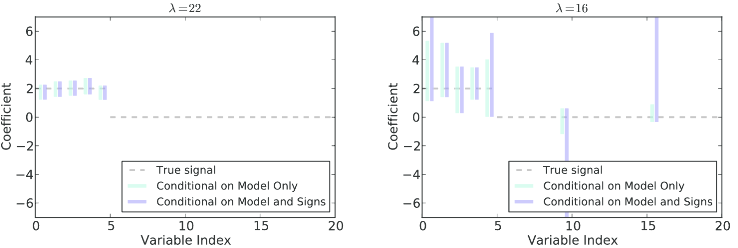}

\caption{Comparison of the confidence intervals by conditioning on the
model only (statistically more efficient, but computationally more
expensive) and conditioning on both the model and signs (statistically
less efficient, but computationally more feasible). Data were simulated
for $n=25$, $p=50$, and 5 true nonzero coefficients; only the first 20
coefficients are shown. (Variables with no intervals are included to
emphasize that inference is only on the selected variables.)
Conditioning on the signs in addition to the model results in no loss
of statistical efficiency when the signal is strong (left) but is
problematic when the signal is weak (right).}
\label{fig:ci_comparison}
\end{figure}

To understand why post-selection intervals are sometimes very wide,
notice that when a truncated Gaussian random variable $Z$ is close to
the endpoints of the truncation interval $[a, b]$, there are many means
$\mu$ that would be consistent with that observation---hence, the
wide intervals. Figure~\ref{fig:intervals} shows confidence intervals
for $\mu$ as a function of $Z$. When $Z$ is far from the endpoints of
the truncation interval, we basically recover the nominal OLS intervals
(i.e., not adjusted for selection).

%
\begin{figure}

\includegraphics{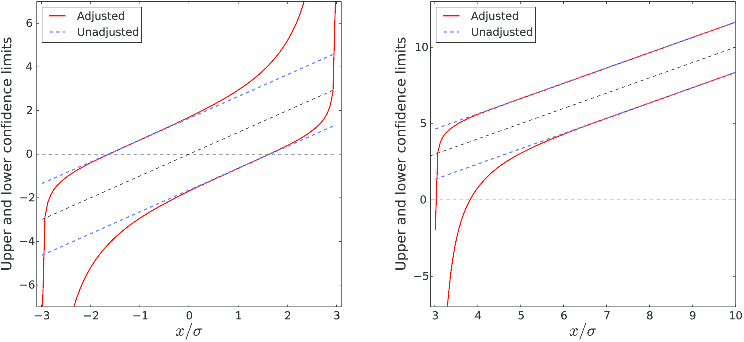}

\caption{Upper and lower bounds of 90\% confidence intervals for $\mu
$ based on a single observation $x/\sigma\sim \mathrm{TN}(0, 1, -3, 3)$. We see
that as long as the observation $x$ is roughly $0.5\sigma$ away from
either boundary, the size of the intervals is comparable to the
unadjusted OLS confidence interval.}
\label{fig:intervals}
\end{figure}

The implications are clear. When the signal is strong, $\boldeta^T \by
$ will be far from the endpoints of the truncation region, so we obtain
the nominal OLS intervals. On the other hand, when a variable just
barely entered the model, then $\boldeta^T \by$ will be close to the
edge of the truncation region, and the interval will be wide.

\subsection{Optimality}

We have derived a confidence interval $C^\E_j$ whose conditional
coverage, given $\{\hat\E= \E\}$, is at least $1-\alpha$. The fact
that we have found such an interval is not remarkable, since many such
intervals have this property. However, given two intervals with the
same coverage, we generally prefer the shorter one.
This problem is considered in \citet{fithian2014optimal} where it is
shown that $C^\E_j$ is,\vspace*{1pt} with one small tweak, the shortest interval
among all {\em unbiased} intervals with $1-\alpha$ coverage.

An {\em unbiased} interval $C$ for a parameter $\theta$ is one which
covers no other parameter $\theta'$ with probability more than
$1-\alpha$, that i,
%
\begin{equation}
\Pp_{\theta}\bigl(\theta' \in C\bigr) \leq1-\alpha\qquad
\mbox{for all }\theta, \theta' \neq\theta.
\end{equation}
Unbiasedness is a common restriction to ensure the existence of an optimal interval
[\citet{TSH}]. The\vspace*{2pt} shortest unbiased interval for  $\beta^\E_j$,
among all intervals with conditional $1-\alpha$ coverage, resembles to the
interval $[L, U]$ in Theorem \ref{thm:conf_int}. There, the critical values $L$ and
$U$ were chosen symmetrically so that the pivot has $\alpha/2$ area in either tail.
However, it may be possible to obtain a shorter interval on average by allocating
the a probability unequally between the two tails. Theorem 5 of \citet
{fithian2014optimal} provides a general formula for obtaining shortest unbiased intervals in exponential families.

\section{Data example}
\label{sec:examples}

We apply our post-selection intervals to the diabetes data set from
\citet{efron2004least}. Since $p < n$ for this data set, we can estimate
$\sigma^2$ using the residual sum of squares from the full regression
model with all $p$
predictors. After standardizing all variables, we chose $\lambda$
according to the strategy in \citet{negahban2012unified}, $\lambda= 2
\Expect(\llVert X^T\varepsilon\rrVert_{\infty})$. This
expectation was computed by
simulation, where $\varepsilon\sim N(0, \hat\sigma^2)$, resulting in
$\lambda\approx190$. The lasso selected four variables:
\texttt{BMI}, \texttt{BP}, \texttt{S3} and \texttt{S5}.

The post-selection intervals are shown in Figure~\ref{fig:diabetes},
alongside the nominal confidence intervals produced by fitting OLS to
the four selected variables, ignoring selection. The nominal intervals
do not have $(1-\alpha)$ coverage conditional on the model and are not
valid post-selection intervals. Also depicted are the confidence
intervals obtained by data splitting, as discussed in Section~\ref
{sec:goals}. This is a competitor method that also produces valid
confidence intervals conditional on the model. The lasso selected the
same four variables on half of the data, and then nominal intervals for
these four variables using OLS on the other half of the data.

%
\begin{figure}

\includegraphics{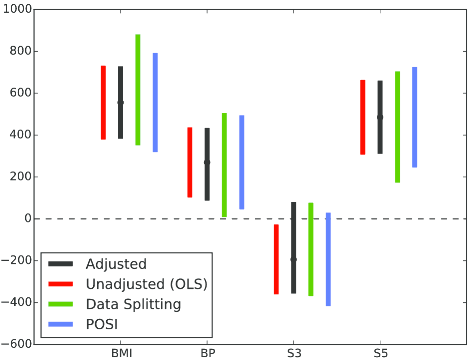}

\caption{Inference for the four variables selected by the lasso
($\lambda= 190$) on the diabetes data set. The point estimate and
adjusted confidence intervals using the approach in Section~\protect
\ref{sec:lasso} are shown in black. The OLS intervals, which ignore
selection, are shown in red. The green lines show the intervals
produced by splitting the data into two halves, forming the interval
based on only half of the data. The blue line corresponds to the POSI
method of \citet{berk2013posi}.}
\label{fig:diabetes}
\end{figure}

We can make two observations from Figure~\ref{fig:diabetes}.
\begin{longlist}[2.]
\item[1.] The adjusted intervals provided by our method essentially
reproduces the OLS intervals for the strong effects, whereas data
splitting intervals are wider by a factor of $\sqrt{2}$ (since only
$n/2$ observations are used in the inference). For this dataset, the
POSI intervals are $1.36$ times wider than the OLS intervals. For all
the variables, our method produces the shortest intervals among the
methods that control selective type 1 error.

\item[2.] One variable, \texttt{S3}
which would have been deemed
significant using the OLS intervals, is no longer significant after
accounting for selection. Data splitting, our selection-adjusted
intervals, and POSI intervals conclude that
\texttt{S3} is not
significant. This demonstrates that taking model selection into account
can have substantive impacts on the conclusions.\vadjust{\eject}
\end{longlist}

\section{Extensions}
\subsection{Estimation of \texorpdfstring{$\sigma^2$}{sigma2}}
\label{sec:estimate:sigma}

The above results rely on knowing $\sigma^2$ or at least having a good
estimate of it. If $n > p$, then the variance $\hat\sigma^2$ of the
residuals from fitting the full model is a consistent estimator and in
general can be substituted for $\sigma^2$ to yield asymptotically
valid confidence intervals. Formally, the condition is that the pivot
is smooth with respect to $\sigma$. Geometrically speaking, the upper
and lower truncation limits $\V^+$ and $\V^-$ must be well-separated
(with high probability). We refer the interested reader to Section~2.3
in \citet{tian2015asymptotics} for details.

In the setting where $p > n$, obtaining an estimate of $\sigma^2$ is
more challenging, but if the pivot satisfies a monotonicity property,
plugging in an overestimate of the variance gives conservative
confidence intervals. We refer the reader to Theorem~11 in \citet
{tibshirani2015uniform} for details.

\subsection{Elastic net}
One problem with the lasso is that it tends to select one variable out
of a set of correlated variables, resulting in estimates that are
unstable. One way to stabilize them is to add an $\ell_2$ penalty to
the lasso objective, resulting in the elastic net [\citet
{zou2005regularization}]:
%
\begin{eqnarray}
\tilde\bbeta&=& \mathop{\operatorname{argmin}}\limits
_{\bbeta} \frac{1}{2} \llVert
\by-X \bbeta\rrVert_2^2 +\lambda\llVert\bbeta\rrVert
_1+ \frac{\gamma}{2} \llVert\bbeta\rrVert_2
^2. \label{eq:elastic-net}
\end{eqnarray}
%
Using a nearly identical argument to Lemma \ref{lem:equiv_sets}, we
see that $\{ \hat\E= \E, \hat\bs= \bs\}$ if and only if there
exist $\tilde\bw$ and $\tilde\bu$ satisfying
\begin{eqnarray*}
\bigl(X_\E^T X_\E+ \gamma I\bigr) \tilde\bw-
X_\E^T \by+ \lambda\bs&=& 0,
\\
X_{-\E}^T (X_\E\tilde\bw- \by) + \lambda\tilde
\bu&=& 0,
\\
\sign(\tilde\bw) &=& \bs,
\\
\llVert\tilde\bu\rrVert_\infty&<& 1.
\end{eqnarray*}
These four conditions differ from those of Lemma \ref{lem:equiv_sets}
in only one respect: $X_\E^T X_\E$ in the first expression is
replaced by $X_\E^T X_\E+ \gamma I$. Continuing the argument of
Section~\ref{sec:lasso-selection}, we see that the selection event can
be rewritten
%
\begin{equation}
\{ \hat\E= \E, \hat\bs= \bs\} = \biggl\{ \pmatrix{ \tilde A_0(\E,
\bs)
\vspace{3pt}\cr
\tilde A_1(\E, \bs) } \by< \pmatrix{ \tilde\bb_0(
\E, \bs)
\vspace{3pt}\cr
\tilde\bb_1(\E, \bs) } \biggr\}, \label{eq:enet_A_b}
\end{equation}
where\vspace*{1pt} $\tilde A_k$ and $\tilde\bb_k$ are analogous to $A_k$ and $\bb
_k$ in Proposition~\ref{prop:A_b}, except replacing $(X_\E^T X_\E
)^{-1}$ by $(X_\E^T X_\E+ \gamma I)^{-1}$ everywhere\vspace*{1pt} it appears.
Notice that $(X_\E^T X_\E)^{-1}$ appears explicitly in $A_1$ and $\bb
_1$, and also\vspace*{1pt} implicitly in $A_0$ and $\bb_0$, since $P_\E$ and
$(X_\E^T)^\dagger$ both depend on $(X_\E^T X_\E)^{-1}$.

Now that we have rewritten the selection event in the form $\{A\by\leq
\bb\}$, we can once again apply the framework in Section~\ref
{sec:polyhedra} to obtain a test for the elastic net conditional on
this event.
\section{Conclusion}

Model selection and inference have long been regarded as conflicting
goals in linear regression. Following the lead of \citet
{berk2013posi}, we have proposed a framework for post-selection
inference that {\em conditions on which model was selected}, that is,
the event $\{\hat\E= \E\}$. We characterize this event for the lasso
and derive optimal and exact confidence intervals for linear contrasts
$\boldeta^T \mathbfu$, conditional on $\{\hat\E= \E\}$. With this
general framework, we can form post-selection intervals for regression
coefficients, equipping practitioners with a way to obtain ``valid''
intervals even after model selection.



%
\begin{appendix}\label{appendix:monotone}
%
\section*{Appendix: Monotonicity of $F$}

%
\begin{lemma}
Let\vspace*{1pt} $F_{\mu}(x):= F_{\mu, \sigma^2}^{[a,b]}(x)$ denote the
cumulative distribution function of a truncated Gaussian random
variable, as defined as in \eqref{eq:U}. Then $F_\mu(x)$ is monotone
decreasing in $\mu$.
\end{lemma}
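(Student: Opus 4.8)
The plan is to deduce the claim from the fact that the one–parameter family $\{TN(\mu,\sigma^2,a,b)\}_{\mu\in\R}$ has monotone likelihood ratio (MLR) in $\mu$; monotonicity of the CDF $F_\mu$ in $\mu$ is then exactly the standard statement that an MLR family is stochastically increasing in its parameter. This is also consistent with the remark made in the proof of Theorem~\ref{thm:conf_int}.

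First I would write the density of $TN(\mu,\sigma^2,a,b)$ explicitly as
\[
f_\mu(x) \;=\; \frac{\sigma^{-1}\phi\!\big((x-\mu)/\sigma\big)}{\Phi\!\big((b-\mu)/\sigma\big)-\Phi\!\big((a-\mu)/\sigma\big)}\,\ones_{[a,b]}(x),
\]
where $\phi=\Phi'$. The key structural observation is that the support $[a,b]$ does not depend on $\mu$, so the $\mu$-dependence is confined to the Gaussian kernel and to an $x$-free normalizing constant. Fix $\mu_1<\mu_2$ and consider, for $x\in[a,b]$, the likelihood ratio $f_{\mu_2}(x)/f_{\mu_1}(x)$. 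The two normalizing denominators combine into a factor not depending on $x$, and the ratio of the Gaussian kernels is $\exp\!\big(-\tfrac{1}{2\sigma^2}[(x-\mu_2)^2-(x-\mu_1)^2]\big)=\exp\!\big(\tfrac{\mu_2-\mu_1}{\sigma^2}x\big)\cdot(\text{$x$-free})$, which is strictly increasing in $x$ since $\mu_2>\mu_1$. Hence $x\mapsto f_{\mu_2}(x)/f_{\mu_1}(x)$ is increasing on $[a,b]$, i.e. the family has MLR in $\mu$.

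Finally I would invoke the standard fact (e.g.\ \citet{TSH}) that an MLR family is stochastically ordered: $\mu_1<\mu_2$ implies $F_{\mu_2}(x)\le F_{\mu_1}(x)$ for every $x$, which is precisely the assertion that $F_\mu(x)$ is decreasing in $\mu$. If a self-contained argument is preferred, the same conclusion follows by contradiction: if $F_{\mu_1}(x_0)<F_{\mu_2}(x_0)$ for some $x_0\in(a,b)$, then $f_{\mu_2}-f_{\mu_1}$ would be negative somewhere on $(a,x_0)$ and positive somewhere on $(x_0,b)$ while integrating to $0$ over $[a,b]$, which is incompatible with $f_{\mu_2}/f_{\mu_1}$ being monotone. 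I do not expect a real obstacle here; the only mild point to get right is keeping the truncation endpoints fixed so that the $\mu$-dependence sits in a log-supermodular kernel. A direct differentiation of \eqref{eq:U} in $\mu$ would also work, but it forces one through a fiddlier inequality among $\Phi$- and $\phi$-values that the MLR route sidesteps, so I would present the MLR argument.
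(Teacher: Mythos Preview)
Your proposal is correct and follows essentially the same route as the paper: establish that the truncated Gaussian family has monotone likelihood ratio in $\mu$, then deduce stochastic monotonicity of the CDFs. The paper carries out the MLR-to-stochastic-ordering step by an explicit double integration rather than citing \citet{TSH}; note also that in your optional contradiction sketch the signs are reversed---from $F_{\mu_1}(x_0)<F_{\mu_2}(x_0)$ you get $f_{\mu_2}-f_{\mu_1}$ \emph{positive} somewhere on $(a,x_0)$ and \emph{negative} somewhere on $(x_0,b)$, which then contradicts the increasing ratio.
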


\begin{pf}
First, the truncated Gaussian distribution with CDF $F_{\mu}:= F_{\mu
, \sigma^2}^{[a,b]}$ is a natural exponential family in $\mu$, since
it is just a Gaussian with a different base measure. Therefore, it has
monotone likelihood ratio in $\mu$. That is, for all $\mu_1 > \mu_0$
and $x_1 > x_0$:
\[
\frac{f_{\mu_1}(x_1)}{f_{\mu_0}(x_1)} > \frac{f_{\mu
_1}(x_0)}{f_{\mu_0}(x_0)},
\]
where $f_{\mu_i}:= dF_{\mu_i}$ denotes the density. (Instead of
appealing to properties of exponential families, this property can also
be directly verified.)

This implies
\begin{eqnarray*}
f_{\mu_1}(x_1) f_{\mu_0}(x_0) &>&
f_{\mu_1}(x_0) f_{\mu_0}(x_1), \qquad
x_1 > x_0.
\end{eqnarray*}
Therefore, the inequality is preserved if we integrate both sides with
respect to $x_0$ on $(-\infty, x)$ for $x < x_1$. This yields
\begin{eqnarray*}
\int_{-\infty}^{x} f_{\mu_1}(x_1)
f_{\mu_0}(x_0) \,dx_0 &>& \int_{-\infty}^{x}
f_{\mu_1}(x_0) f_{\mu_0}(x_1) \,dx_0,\qquad  x < x_1,
\\
f_{\mu_1}(x_1) F_{\mu_0}(x) &>& f_{\mu_0}(x_1)
F_{\mu_1}(x), \qquad x < x_1.
\end{eqnarray*}
Now we integrate both sides with respect to $x_1$ on $(x, \infty)$ to obtain
\begin{eqnarray*}
\bigl(1 - F_{\mu_1}(x)\bigr) F_{\mu_0}(x) &>& \bigl(1 -
F_{\mu_0}(x)\bigr) F_{\mu_1}(x)
\end{eqnarray*}
which establishes $F_{\mu_0}(x) > F_{\mu_1}(x)$ for all $\mu_1 > \mu_0$.
\end{pf}
\end{appendix}


\section*{Acknowledgements}
We thank Will Fithian, Sam Gross and Josh Loftus for helpful comments
and discussions. In particular, Will Fithian provided insights that led
to the geometric intuition of our procedure shown in Figure~\ref{fig:polyhedron}.


%

\printaddresses
\end{document}